\title{Computing points on bielliptic modular curves over fixed quadratic fields}
\author{Philippe Michaud-Jacobs}
\date{\vspace{-4ex}}
\newtheorem{theorem}{Theorem}[section]
\newtheorem*{theorem*}{Theorem}
\theoremstyle{definition}
\newtheorem{remark}[theorem]{Remark}
\newtheorem*{remark*}{Remark}
\providecommand{\Q}{\mathbb{Q}}
\providecommand{\Z}{\mathbb{Z}}
\newcommand{\Addresses}{{
  \bigskip
  \footnotesize

 \textsc{Mathematics Institute, University of Warwick, CV4 7AL, United Kingdom}\par\nopagebreak
  \textit{E-mail address}: \texttt{p.rodgers@warwick.ac.uk}
}}
\let\svthefootnote\thefootnote
\newcommand\freefootnote[1]{%
  \let\thefootnote\relax%
  \footnotetext{#1}%
  \let\thefootnote\svthefootnote%
}
\begin{document}

\maketitle

\begin{abstract}
We present a Mordell--Weil sieve that can be used to compute points on certain bielliptic modular curves $X_0(N)$ over fixed quadratic fields. We study $X_0(N)(\mathbb{Q}(\sqrt{d}))$ for $N \in \{ 53,61,65,79,83,89,101,131 \}$ and $\lvert d \rvert < 100$. 
\end{abstract}


\section{Introduction}


There has been a lot of recent interest in computing low-degree points on modular curves, and in particular in computing quadratic points on the curves $X_0(N)$. Computing such points gives much insight into the arithmetic of elliptic curves and has direct applications in the resolution of Diophantine equations (see \cite[p.~888]{small_real} or \cite{MJ_flt} for such examples). \freefootnote{\emph{Date}: \date{\today}.}
\freefootnote{\emph{Keywords}: Modular curve, quadratic points, elliptic curve, Mordell--Weil sieve.}
\freefootnote{\emph{MSC2020}: primary 11G18; secondary 11G05, 14G05.}
\freefootnote{The author is supported by an EPSRC studentship and has previously used the name Philippe Michaud-Rodgers.}

As we range over all quadratic fields, a curve $X_0(N)$ will either have finitely many or infinitely many quadratic points. For those curves $X_0(N)$ that have finitely many quadratic points, these points have been computed in many cases, such as when the genus of $X_0(N)$ is $\leq 5$, or when $X_0(N)$ is bielliptic \cite{Boxquad, ozmansiksek, NajVuk}. If $X_0(N)$ has genus $ \geq 2$ and has infinitely many quadratic points (so that $X_0(N)$ is either hyperelliptic, or bielliptic with an elliptic quotient of positive rank over $\Q$), a geometric description of all the quadratic points has been given in these cases \cite{Boxquad, hyperquad, NajVuk}.

There are precisely $10$ values of $N$ such that the modular curve $X_0(N)$ is bielliptic with an elliptic quotient of positive rank \cite[pp.~26--28]{biellipticbars}. For two of these values of $N$, namely $37$ and $43$, the methods we present will not work (see Remark \ref{fails}), and so we will consider the remaining eight values of $N$, which are \begin{equation*} N \in \mathcal{N} :=  \{53,61,65,79,83,89,101,131 \}. \end{equation*} For each $N \in \mathcal{N}$ the elliptic curve $X_0^+(N) = X_0(N) / w_N$ has rank $1$ over $\Q$. In \cite{Boxquad, NajVuk} it is proven that every quadratic point on $X_0(N)$ arises as the pullback of a rational point on $X_0^+(N)$ (via the natural degree $2$ quotient map). However, this classification does not describe $X_0(N)(K)$ for a given quadratic field $K$. The purpose of this paper is to introduce a Mordell--Weil sieve that can be used to check, for $N \in \mathcal{N}$, whether $X_0(N)(K) = X_0(N)(\Q)$ for a given quadratic field $K$. The sieve uses information on the splitting behaviour of primes in $K$ together with the structure of the Mordell--Weil group of $X_0^+(N)(\Q)$ modulo these primes. The sieve builds on ideas present in the author's work in \cite[pp.~338--340]{MJ_flt}. We prove the following result.

\begingroup
\renewcommand\thetheorem{1}
\begin{theorem}\label{Thm1}
Let $N \in \{53,61,65,79,83,89,101,131\}$ and let $d \in \Z$ such that $\lvert d \rvert < 100$. Then $X_0(N)(\Q(\sqrt{d})) \ne X_0(N)(\Q)$ if and only if $d \in \mathcal{D}_N$, where 
\begin{align*}
\mathcal{D}_{53} & = \{ -43, -11, -7, -1 \}, \\
\mathcal{D}_{61} & = \{ -19, -3, -1, 61 \}, \\
\mathcal{D}_{65} & = \{ -79, -1 \}, \\
\mathcal{D}_{79} & = \{ -43, -7, -3 \}, \\
\mathcal{D}_{83} & = \{ -67, -43, -19, -2 \}, \\
\mathcal{D}_{89} & = \{ -67, -11, -2, -1, 89 \}, \\
\mathcal{D}_{101} & = \{ -43, -19, -1 \}, \\
\mathcal{D}_{131} & = \{ -67, -19, -2 \}.
\end{align*}
\end{theorem}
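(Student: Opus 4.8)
The plan is to translate the problem into one about rational points on the elliptic curve $E := X_0^+(N)$ and then to settle it with a Mordell--Weil sieve. Let $\phi \colon X_0(N) \to E$ denote the degree $2$ quotient by $w_N$, defined over $\Q$. Since $\mathbb Q(X_0(N))/\mathbb Q(E)$ is a quadratic extension cut out by $w_N$, there is a function $g \in \Q(E)^\times$ (the square of a function on $X_0(N)$ that is anti-invariant under $w_N$) such that the fibre $\phi^{-1}(P)$ over $P \in E(\Q)$ splits into two rational points when $g(P)$ is a square and is a conjugate pair defined over $\Q(\sqrt{g(P)})$ otherwise. By the classification of Box and Najman--Vukorepa quoted above, every quadratic point on $X_0(N)$ has the form $\{Q, w_N(Q)\} = \phi^{-1}(P)$ for some $P \in E(\Q)$. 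Hence, for squarefree $d \neq 1$, we have $X_0(N)(\Q(\sqrt d)) \neq X_0(N)(\Q)$ if and only if there exists $P \in E(\Q)$ whose value $g(P)$ has squarefree part $d$; and since $\Q(\sqrt d)$ depends only on the squarefree part of $d$, it suffices to treat squarefree $d$ with $\lvert d \rvert < 100$. The first step is therefore to compute explicit models for $X_0(N)$, $E$ and $\phi$, and to record generators of $E(\Q) \cong \Z \oplus T$, where $T$ is the torsion subgroup.

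For the forward implication ($d \in \mathcal D_N$) it is enough to exhibit a witness: searching over $P = nG + t$ for a generator $G$, torsion $t \in T$, and small $\lvert n \rvert$ produces in each listed case a rational point $P \in E(\Q)$ for which $g(P)$ has the prescribed squarefree part $d$, and the non-split fibre $\phi^{-1}(P)$ is then an explicit genuinely quadratic point on $X_0(N)$ defined over $\Q(\sqrt d)$.

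The substance of the theorem is the reverse implication: for each squarefree $d$ with $\lvert d \rvert < 100$ and $d \notin \mathcal D_N$, I must show that no $P \in E(\Q)$ has $g(P)$ of squarefree part $d$. The sieve rests on a local observation. Suppose $Q \in X_0(N)(K)$ with $K = \Q(\sqrt d)$ lies above $P \in E(\Q)$, and let $\ell$ be a prime of good reduction for $X_0(N)$, $E$ and $\phi$ with $\ell \nmid 2dN$. Reducing $Q$ modulo a prime of $K$ above $\ell$ shows that the splitting type of $\phi^{-1}(\bar P)$ over $\mathbb F_\ell$ matches the splitting of $\ell$ in $K$: the fibre splits into two $\mathbb F_\ell$-points when $\ell$ splits in $K$, and is a conjugate pair over $\mathbb F_{\ell^2}$ when $\ell$ is inert; in terms of $g$ this reads $\left( \frac{g(\bar P)}{\ell} \right) = \left( \frac{d}{\ell} \right)$. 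To exploit this I would choose auxiliary primes $\ell_1, \dots, \ell_k$ of good reduction and, for each, compute the image $H_{\ell_i} = \langle \bar G \rangle + \bar T$ of $E(\Q)$ in $E(\mathbb F_{\ell_i})$, partitioned according to the $\mathbb F_{\ell_i}$-splitting type of the fibre of $\phi$. Reducing the coefficient $n$ modulo the least common multiple $L$ of the orders of $\bar G$ in the groups $E(\mathbb F_{\ell_i})$ and letting $t$ range over $T$, each residue class of $P$ determines a tuple $(\bar P_1, \dots, \bar P_k)$; I discard every class for which some $\bar P_i$ has splitting type incompatible with $\left( \frac{d}{\ell_i} \right)$. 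If no class survives, then no such $P$ exists and $d \notin \mathcal D_N$, as required.

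The main obstacle is that the sieve furnishes only a \emph{necessary} congruence condition, so there is no a priori guarantee that finitely many primes eliminate all residue classes; the crux of the proof is thus the (computer-assisted) verification that for every relevant pair $(N,d)$ with $d \notin \mathcal D_N$ — several hundred cases — a suitable finite collection of auxiliary primes can be found to empty the set of surviving classes. Some care is also needed at the degenerate fibres, namely the images of the ramification points of $\phi$ where $g(\bar P) = 0$ and the character comparison breaks down, and at the primes of bad reduction and the primes dividing $d$, which must be excluded from the sieve and, where the surviving classes meet them, analysed directly.
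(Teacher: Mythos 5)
Your proposal is correct and takes essentially the same approach as the paper: the same reduction, via the Box and Najman--Vukorepa classification, to rational points on the elliptic quotient $X_0^+(N)$, followed by the same Mordell--Weil sieve comparing the $\mathbb{F}_\ell$-splitting type of fibres of the quotient map with the splitting of $\ell$ in $\Q(\sqrt{d})$, combined by CRT across auxiliary primes, with the computer-assisted verification correctly identified as the crux. The only notable deviation is that you exclude the primes $\ell \mid d$ from the sieve, whereas the paper includes them and finds the ramified primes to be its most effective eliminators (for ramified $\ell$ every residue class whose fibre is a pair of distinct points is ruled out, whether the pair is defined over $\mathbb{F}_\ell$ or $\mathbb{F}_{\ell^2}$), so your restriction costs efficiency but not correctness.
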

\endgroup

Although we have considered integers $d$ satisfying $\lvert d \rvert < 100$ here, there are no apparent obstructions to proving analogous results for any integer $d$.

For certain (but not all) integers $d$, the results of Theorem \ref{Thm1} could be achieved by applying \cite[Theorem~1.1]{ozmantwist} or some of the techniques described in \cite{cyclic_isog}. In Section 3, we compare (for $N = 53$) our results with those one can obtain by applying \cite[Theorem~1.1]{ozmantwist}, and use this to provide an example of a curve that violates the Hasse principle. 

We note that results of a similar nature to Theorem \ref{Thm1} (obtained using different techniques) are proven in \cite{hyper} for hyperelliptic curves $X_0(N)$.

The \texttt{Magma} \cite{magma} code used to support the computations in this paper is available at \texttt{https://github.com/michaud-jacobs/bielliptic}.


\section{A Mordell--Weil sieve}


In this section we present a Mordell--Weil sieve and apply it to prove Theorem \ref{Thm1}.

We first describe how to obtain a suitable model for $X_0(N)$ for $N \in \mathcal{N}$. Let $g$ denote the genus of the modular curve $X_0(N)$. We start by computing a basis $f_1, \dots, f_g$ of cusp forms for $S_2(\Gamma_0(N))$ with integer Fourier coefficients such that the Atkin--Lehner involution $w_N$ satisfies $w_N(f_1) = f_1$ and $w_N(f_i) = -f_i$ for $2 \leq i \leq g$ (we refer to such a basis as a \emph{diagonalised basis}). For each $N \in \mathcal{N}$ the curve $X_0(N)$ is non-hyperelliptic of genus $>2$ and we may obtain a nonsingular model for $X_0(N)$ over $\Q$ in $\mathbb{P}^{g-1}_{x_1, \dots, x_g}$ as the image of the canonical embedding on the cusp forms $f_1, \dots, f_g$. The details of this (standard) procedure are described in \cite[pp.~17--38]{galbraith}, and the \texttt{Magma} code we used to do this is adapted from \citep{ozmansiksek}.

The Atkin--Lehner involution $w_N$ on this model is then given by the map $(x_1 : x_2 : \dots : x_g) \mapsto (-x_1 : x_2 : \dots : x_g)$. We denote by $ \psi: X_0(N) \rightarrow X_0^+(N) $ the degree $2$ map induced by quotienting by $w_N$. In each case, we found that the projection map onto the coordinates $x_2, \dots, x_g$ had degree $2$ and image $X_0^+(N)$ (and not some quotient of $X_0^+(N)$), so that the map $\psi$ is given by \begin{align*} \psi : X_0(N) & \longrightarrow X_0^+(N) \\ (x_1 : x_2 : \dots : x_g) & \longmapsto (x_2 : \dots : x_g).  \end{align*} We in fact then obtained a Weierstrass model for $X_0^+(N)$ and composed $\psi$ with this transformation (see the example in Section 3). The reason for using a diagonalised model for $X_0(N)$ is twofold. First, it forces the coordinates of a quadratic point to be of a certain shape, as we see below. Secondly, it greatly speeds up the computations we perform in the sieving step.

Let $K = \Q(\sqrt{d})$ be a quadratic field and write $\sigma$ for the generator of $\mathrm{Gal}(K/\Q)$. Suppose that $P \in X_0(N)(K) \backslash X_0(N)(\Q)$ (equivalently, $P$ is a non-cuspidal quadratic point). In projective coordinates, we may write   \begin{align*} P & = (a_1 + b_1\sqrt{d} : a_2 + b_2 \sqrt{d} :  \dots : a_g + b_g \sqrt{d}), \\
w_N(P) &  = (-a_1 - b_1\sqrt{d} : a_2 + b_2\sqrt{d}  : \dots : a_g + b_g \sqrt{d}), \text{  and} \\ 
P^\sigma & = (a_1 - b_1\sqrt{d} : a_2 - b_2 \sqrt{d} :  \dots : a_g - b_g \sqrt{d}), \end{align*}  where $a_i, b_i \in \Z$ for $1 \leq i \leq g$.

As discussed in the introduction, thanks to the work of Box and Najman--Vukorepa in \cite{Boxquad, NajVuk}, we know that $\psi(P) = \psi(P^\sigma) \in X_0^+(N)(\Q)$, or equivalently, $w_N(P) = P^\sigma$. It follows that \[ P = (b_1 \sqrt{d} : a_2 : \dots : a_g), \] with $b_1 \neq 0$, and we may assume that $\gcd(b_1,a_2, \dots, a_g) = 1$ by rescaling if necessary.

We now present the sieve in the case that $N \neq 65$. In the case $N = 65$ we will need to adapt the sieve slightly, and we discuss this case in the proof of Theorem \ref{Thm1} below. For each $N \neq 65$, we have that $X_0^+(N)(\Q) \cong \Z$, and we let $R$ denote a generator of the Mordell--Weil group, so that\[ \psi(P) = m \cdot R \quad \text{ for some } m \in \Z.\] Let $\ell$ be a prime of good reduction for our models of $X_0(N)$ and $X_0^+(N)$, and consider the following commutative diagram, where $\sim$ denotes reduction mod $\ell$, or a prime of $K$ above $\ell$:
\begin{center}
\begin{tikzcd} X_0(N)\arrow[r, "\psi"] \arrow[d, "\sim"] & X_0^+(N) \arrow[d, "\sim" ] \\  \widetilde{X}_0(N)  \arrow[r,  "\psi_\ell"]& \widetilde{X}_0^+(N)  \end{tikzcd}
\end{center}
By commutativity we have that $\psi_\ell(\widetilde{P}) = m \cdot \widetilde{R}$, so that $\widetilde{P} \in \psi_\ell^*(m \cdot \widetilde{R})$. 
Write $G_\ell$ for the order of $\widetilde{R}$ in the group $\widetilde{X}_0^+(N)(\mathbb{F}_\ell)$. Then \[\psi_l(\widetilde{P}) = m \cdot \widetilde{R} = m_0 \cdot \widetilde{R}\] for some integer $m_0$ satisfying $0 \leq m_0 < G_\ell$ and $m \equiv m_0 \pmod{G_\ell}$. We note that $\psi_\ell^*(m \cdot \widetilde{R}) = \psi_\ell^*(m_0 \cdot \widetilde{R})$. For each integer $0 \leq m_1 < G_\ell$ we explicitly compute the set $\psi_\ell^*(m_1 \cdot \widetilde{R}) \subset \widetilde{X}_0(N)(\mathbb{F}_{\ell^2})$. There are three cases:
\begin{enumerate}[(i)]
\item The set $\psi_\ell^*(m_1 \cdot \widetilde{R})$ consists of a pair of distinct points defined over $\mathbb{F}_\ell$.

If  $\ell$ is inert or ramifies in $K$, then $\psi_\ell^*(m \cdot \widetilde{R})$ will not consist of a pair of distinct points defined over $\mathbb{F}_{\ell}$, and so  $ m \not\equiv m_1 \pmod{G_\ell}$.

\item The set $\psi_\ell^*(m_1 \cdot \widetilde{R})$ consists of a pair of points defined over $\mathbb{F}_{\ell^2}$ (with each point not defined over $\mathbb{F}_\ell$).

If  $\ell$ splits or ramifies in $K$, then $\psi_\ell^*(m \cdot \widetilde{R})$ will not consist of a pair of points defined over $\mathbb{F}_{\ell^2}$, and so  $ m \not\equiv m_1 \pmod{G_\ell}$.

\item The set $\psi_\ell^*(m_1 \cdot \widetilde{R})$ consists of a single point defined over $\mathbb{F}_\ell$.
\end{enumerate}
Verifying the splitting behaviour of the prime $\ell$ in cases (i) and (ii) leaves us with a list of possible values for $m \pmod{G_\ell}$. 

We may then repeat this process with a list of primes $\ell_1, \dots, \ell_s$. For each $1 \leq i \leq s$ we obtain a list of possibilities for $m \pmod{G_{\ell_i}}$. This gives a system of congruences that we may solve using the Chinese remainder theorem to obtain a list of possibilities for $m \pmod{\mathrm{lcm}(G_{\ell_i})_{1 \leq i \leq s)}}$. If no solution exists to this system of congruences then we obtain a contradiction and conclude that $X_0(N)(K) = X_0(N)(\Q)$.

\begin{proof}[Proof of Theorem \ref{Thm1} for $N \ne 65$]
Let $N \in \mathcal{N} \backslash \{65\}$. We start by proving that if $d \in \mathcal{D}_N$ then $X_0(N)(\Q(\sqrt{d})) \ne X_0(N)(\Q)$. We computed the preimages $\psi^*(t \cdot R)$ for $t \in \Z$ with $\lvert t \rvert \leq 5$ and verified the field of definition of the points we obtained. For each $d \in \mathcal{D}_N$ we found a pair of quadratic points in $X_0(N)(\Q(\sqrt{d}))$.

For the converse, we suppose that $\lvert d \rvert < 100$ with $d \notin \mathcal{D}_N$ and aim to prove that $X_0(N)(\Q(\sqrt{d})) = X_0(N)(\Q)$. We note that this is immediate if $\Q(\sqrt{d}) = \Q$, so we assume that $\Q(\sqrt{d})$ is a quadratic field. Suppose, for a contradiction, that $P \in X_0(N)(\Q(\sqrt{d})) \backslash X_0(N)(\Q)$. We applied the Mordell--Weil sieve described above with the following (ordered) choice of primes (we discuss this choice in Remark \ref{choice}): \begin{equation}\label{L} \mathcal{L} = \{ \ell \mid d : \ell \nmid 2N \} \cup \{ \ell < 1000 : \ell \nmid 2N \text{ and } q \nmid G_\ell \text{ for primes } q > 7 \}. \end{equation} In each case this led to a contradiction.
\end{proof}

\begin{proof}[Proof of Theorem \ref{Thm1} for $N = 65$]
The proof of the theorem in this case is very similar to the case $N \neq 65$. The key difference is that $X_0^+(N)(\Q) \cong \Z \oplus \Z / 2\Z$. We write $Q$ for the $2$-torsion point and choose a point $R$ such that any point in $X_0^+(N)(\Q)$ may be expressed as $m \cdot R + n \cdot Q$ for some $m \in \Z$ and $n = 0$ or $1$. For our choice of $R$, we found that $\psi^*(-R)$ and $\psi^*(-2R)$ consisted of pairs of quadratic points defined over $\Q(\sqrt{-1})$ and $\Q(\sqrt{-79})$ respectively, proving one direction of the theorem.

For the converse, let $d \notin \mathcal{D}_N$ be such that $\Q(\sqrt{d})$ is a quadratic field and $\lvert d \rvert < 100$. Suppose, for a contradiction, that there exists a point $P \in X_0(N)(\Q(\sqrt{d})) \backslash X_0(N)(\Q)$. Either $\psi(P) = m \cdot R$ or $m \cdot R + Q$. In the first case, we apply the sieve exactly as in the proof for $N \neq 65$ (with the same choice of primes) to achieve a contradiction. In the second case, we again apply the sieve in the same way, except that we work with the point $m \cdot R + Q$ instead. To be precise, for each prime $\ell$, we have that $\widetilde{P} \in \psi_\ell^*(m \cdot \widetilde{R} + \widetilde{Q})$, and so we compute $\psi_\ell^*(m_1 \cdot \widetilde{R} + \widetilde{Q})$ for each $0 \leq m_1 < G_\ell$. By considering each preimage and the splitting behaviour of $\ell$ in the quadratic field $\Q(\sqrt{d})$ we obtain a list of possibilities for $m \pmod {G_\ell}$. As in the previous case, we achieved a contradiction for each $d$.
\end{proof}

The total computation time for the proof of Theorem \ref{Thm1} was 2500 seconds running on a 2200MHz AMD Opteron.

\begin{remark}\label{choice} We discuss the choice of primes $\mathcal{L}$ used in the proof of the theorem. We start by choosing the primes that ramify as these usually eliminate the greatest number of possibilities for $m \pmod{G_\ell}$. We then choose primes $\ell$ such that the values $G_\ell$ are small and share many prime factors. There are two reasons for doing this. First, when solving each system of congruences we are more likely to obtain fewer solutions, and ultimately a contradiction. Secondly, we avoid (or reduce the likelihood) of a combinatorial explosion, since the lowest common multiple of the $G_\ell$ can grow very quickly if the primes $\ell$ are not chosen carefully. We note that the largest prime $\ell$ we in fact ended up reaching was $\ell = 593$ in the case $N = 101$ and $d = 31$. 
\end{remark}

\begin{remark} \label{fails}
As discussed in the introduction, we have not considered the curves $X_0(37)$ or $X_0(43)$. The curve $X_0(37)$ is bielliptic with an elliptic quotient of positive rank, but it is also hyperelliptic, and therefore has two sources of infinitely many quadratic points, meaning the sieve we have presented would not work. The reason the sieve does not work for the curve $X_0(43)$ is due to the fact that $X_0(43)$ has a non-cuspidal rational point that is fixed by the Atkin--Lehner involution $w_{43}$. The sieve cannot rule out the possibility that a quadratic point is equal to this non-cuspidal rational point.
\end{remark}

Although we have presented this sieve for certain specific bielliptic modular curves $X_0(N)$, the sieve could be suitably adapted to compute quadratic points on a wider range of curves. Indeed, it should even be possible to apply a similar sieve to compute quadratic points on any curve $X$ with a degree $2$ quotient of genus $\geq 1$, if there are finitely many quadratic points on $X$ not arising as pullbacks of rational points on this quotient, and that these have all been computed. Although, as in the case $X_0(43)$ discussed above, there may be obstructions to the sieving process succeeding.


\section{Example computations}


In this section we provide some details of computations in the case $N = 53$. We start by obtaining a model for the genus $4$ curve $X_0(53)$ on which the Atkin--Lehner involution acts diagonally. By searching for relations between a diagonalised basis of cusp forms we obtain the following model in $\mathbb{P}^3_{x_1,x_2,x_3,x_4}$:
\begin{align*}
& x_1^2 - x_2^2 + 2x_2x_3 - 6x_2x_4 + 11x_3^2 - 6x_3x_4 - x_4^2 = 0, \\
& x_1^3 - x_1x_2^2 + 2x_1x_2x_3 - 6x_1x_2x_4 + 11x_1x_3^2 - 
    6x_1x_3x_4 - x_1x_4^2 = 0, \\
& x_1^2x_2 - x_2^3 + 2x_2^2x_3 + 5x_2x_3^2 + 5x_2x_4^2 - 
    6x_3^2x_4 + 6x_4^3 = 0, \\
& x_1^2x_3 - x_2^2x_3 + 2x_2x_3^2 - 6x_2x_3x_4 + 11x_3^3 - 
    6x_3^2x_4 - x_3x_4^2 = 0, \\
& x_1^2x_4 - x_2x_3^2 + 3x_2x_3x_4 - 5x_2x_4^2 + 10x_3^2x_4 - 
    6x_3x_4^2 = 0, \\
& x_2^2x_4 - x_2x_3^2 + x_2x_3x_4 + x_2x_4^2 - x_3^2x_4 + x_4^3 = 0. 
\end{align*}
The equations for the Atkin--Lehner involution  on this  model are given by $w_{53} : (x_1 : x_2 : x_3 : x_4) \mapsto (-x_1 : x_2 : x_3 : x_4)$.
The degree $2$ map to the elliptic curve $X_0^+(53)$ is then simply given by the projection map $(x_1 : x_2 : x_3 : x_4) \mapsto (x_2 : x_3 : x_4)$. We then apply a transformation to take the image of this projection map to the following Weierstrass form:
\[ X_0^+(53): \; Y^2Z+XYZ + YZ^2 = X^3-X^2Z. \] The map $\psi$ is given by \begin{align*} \psi: X_0(53) & \longrightarrow X_0^+(53) \\ (x_1 : x_2 : x_3 : x_4) & \longmapsto (x_2x_3 + x_3x_4 : x_2^2 + x_2x_4 - x_3x_4 + x_4^2 :  x_3x_4).
\end{align*} 
We choose $R = (0 : -1 : 1) \in X_0^+(53)(\Q)$ as a generator of the Mordell--Weil group.

We now exhibit some steps in the sieving process for $d = -47$. As in the previous section, we will assume that $P \in X_0(53)(\Q(\sqrt{d})) \backslash X_0(53)(\Q)$ and write $\psi(P) = m \cdot R$. We apply the sieve with the primes $ \ell = 5,7,$ and $11$. The prime $\ell = 5$ is inert in $\Q(\sqrt{d})$ and we find that $m \equiv 3$ or $5 \pmod{6}$. Next,  $\ell = 7$ splits and we find that $m \equiv 0, 3, 4, 7,$ or $11 \pmod{12}$. Combining this with the previous condition we have $m \equiv 3$ or $11 \pmod{12}$. Finally, the prime $\ell = 11$ is inert and we find that $m \equiv 1, 2, 5, 7,$ or $10 \pmod{12}$, a contradiction. We conclude that  $X_0(53)(\Q(\sqrt{-47})) = X_0(53)(\Q)$.

We have in fact proven that $X_0(53)(K) = X_0(53)(\Q)$ for any quadratic field $K$ in which $5$ and $11$ are inert and $7$ splits. In a similar vein, when $d = 3$ we achieved a contradiction using only the prime $\ell = 3$, and this proves that $X_0(53)(K) = X_0(53)(\Q)$ for any quadratic field $K$ in which $3$ ramifies. This type of result is similar to those appearing in \cite{hyper}, and we could seek to prove more results along these lines, but we do not pursue this here.

In order to verify that the sieve is working as expected, we can try applying it for a value $d \in \mathcal{D}_{53}$. For example, applying the sieve with $d = -11 \in \mathcal{D}_{53}$ and the primes in  $\mathcal{L}$ (defined as in (\ref{L})) outputs a list of possibilities for $m \pmod{\mathrm{lcm}(G_\ell)_{\ell \in \mathcal{L}}}$, where $\mathrm{lcm}(G_\ell)_{\ell \in \mathcal{L}} =  63504000$. We find that either $m = 1$ or that $m \ge 1905121$. The fact that $m = 1$ remains as a possibility is because $\psi^*(1 \cdot R)$ consists of a pair of quadratic points defined over $\Q(\sqrt{-11})$.

It is interesting to consider how the results of Theorem \ref{Thm1} overlap with the results one may obtain by applying the techniques of \cite{ozmantwist}, which give a criterion for testing whether $X_0(N)(\Q(\sqrt{d})) = X_0(N)(\Q)$ by checking local points on the curve $X_0^{(d)}(N)$. This curve is the quadratic twist of $X_0(N)$ by the Atkin--Lehner involution $w_N$ and the quadratic extension $\Q(\sqrt{d}) / \Q$ (see \cite[p.~628]{hasse} for a precise definition). For $N = 53$, in Theorem \ref{Thm1} we prove that $X_0(N)(\Q(\sqrt{d})) = X_0(N)(\Q)$ for $117$ values of $d$, with $d$ squarefree and $\lvert d \rvert < 100$. Applying \cite[Theorem~1.1]{ozmantwist},\footnote{As discussed in \cite[p.~39]{Banwait}, some care is needed in order to interpret correctly parts (5) and (6) of \cite[Theorem~1.1]{ozmantwist}.} we reproduced these results for $94$ of these values. For some values of $d$, where our sieving method works, but applying \cite[Theorem~1.1]{ozmantwist} fails, we can often obtain examples of curves that violate the Hasse principle. Continuing our example with $d = -47$, we find that $X_0^{(-47)}(53)(\Q)$ has points everywhere locally by applying \cite[Theorem~1.1]{ozmantwist}. However, $X_0^{(-47)}(53)(\Q) = \emptyset$, as proven above. Similar examples are considered in \cite[pp.~344--346]{ozmantwist}, where a standard Mordell--Weil sieve is applied to twists of a hyperelliptic curve.


\bibliographystyle{plainnat}

\begin{thebibliography}{}

\bibitem{Banwait}
B.~Banwait.
\newblock Explicit isogenies of prime degree over quadratic fields.
\newblock \emph{Int. Mat. Res. Not.}, 1--48, 2022.

\bibitem{cyclic_isog}
B.~Banwait, F.~Najman, and O. Padurariu.
\newblock Cyclic isogenies of elliptic curves over fixed quadratic fields.
\newblock \emph{arXiv preprint}, 	arXiv:2206.08891v2, 2022. 

\bibitem{biellipticbars}
F.~Bars.
\newblock On quadratic points of classical modular curves.
\newblock \emph{Contemp. Math.}, 701:\penalty0 17--34, 2018.

\bibitem{magma}
W.~Bosma, J.~Cannon, and C.~Playoust.
\newblock The Magma algebra system. I. The user language.
\newblock \emph{J. Symbolic Comput.}, 24\penalty0 (3-4):\penalty0 235--265, 1997.

\bibitem{Boxquad}
J.~Box.
\newblock Quadratic points on modular curves with infinite Mordell--Weil group.
\newblock \emph{Math. Comp.}, 90\penalty0 (327):\penalty0
321--343, 2021.

\bibitem{hyperquad}
P.~Bruin and F.~Najman.
\newblock Hyperelliptic modular curves $ X_0(n)$ and isogenies of elliptic curves over quadratic fields.
\newblock \emph{LMS J. Comput. Math.}, 18\penalty0 (1):\penalty0 578--602, 2015.

\bibitem{hasse}
P.~Clark.
\newblock An ``Anti-Hasse Principle" for Prime Twists.
\newblock \emph{Int. J. Number Theory}, 4\penalty0 (4):\penalty0 627--637, 2008.

\bibitem{small_real}
N.~Freitas and S.~Siksek.
\newblock Fermat's last theorem over some small real quadratic fields.
\newblock \emph{Algebra Number Theory}, 9\penalty0 (4):\penalty0 875--895, 2015.

\bibitem{galbraith}
S.~Galbraith.
\newblock \emph{Equations for modular curves}.
\newblock PhD thesis, University of Oxford, 1996.

\bibitem{MJ_flt}
P.~Michaud-Jacobs.
\newblock Fermat's Last Theorem and modular curves over real quadratic fields.
\newblock \emph{Acta Arith.}, 203\penalty0 (4):\penalty0 319--352, 2022.

\bibitem{hyper}
F.~Najman and A.~Trbovi\'c. 
\newblock Splitting of primes in number fields generated by points on some modular curves. 
\newblock \emph{Res. Number Theory}, 8\penalty0 (28):\penalty0 1--18, 2022.

\bibitem{NajVuk}
F.~Najman and B.~Vukorepa.
\newblock Quadratic points on bielliptic modular curves.
\newblock \emph{Math. Comp.}, to appear, (arXiv:2112.03226v2), 2022.

\bibitem{ozmantwist}
E.~Ozman.
\newblock Points on quadratic twists of $X_0 (N)$.
\newblock \emph{Acta Arith.}, 152\penalty0 (4):\penalty0 323--348, 2012.

\bibitem{ozmansiksek}
E.~Ozman and S.~Siksek.
\newblock Quadratic points on modular curves.
\newblock \emph{Math. Comp.}, 88\penalty0 (319):\penalty0 2461--2484, 2019.

\end{thebibliography}

\Addresses

\end{document}